\documentclass[preprint,11pt]{elsarticle}

\usepackage{amssymb}
\usepackage{amsmath,amsthm,amsfonts,amssymb,latexsym,mathrsfs,color,hyperref}

\newtheorem{theorem}{Theorem}

\newtheorem{proposition}[theorem]{Proposition}

\newtheorem{lemma}[theorem]{Lemma}

\newtheorem{example}[theorem]{Example}

\newcommand{\sech}{\ensuremath{\mathrm{sech\ }}}

\newcommand{\sgn}{{\rm sgn\,}}
\newcommand{\rz}{{\rm RZ}}

\newcommand{\seps}{\prec}
\newcommand{\R}{{\mathbb R}}

\newcommand{\arxiv}[1]{\href{http://arxiv.org/abs/#1}{\texttt{arXiv:#1}}}
\journal{}

\begin{document}

\begin{frontmatter}



\title{The half interlacing property among the types $A,B$ and $D$ Eulerian polynomials}

\author[focal]{Shi-Mei Ma}
\ead{shimeimapapers@163.com}
\address[focal]{School of Mathematics and Statistics, Shandong University of Technology, Zibo, Shandong 255000, P.R. China}
\begin{abstract}
A famous result in the theory of combinatorial polynomials is the real-rootedness of the type $D$ Eulerian polynomial $D_n(x)$, which was originally conjectured by Brenti in 1994.
By constructing a set of compatible polynomials over $s$-inversion sequences, Savage and Visontai proved this conjecture in 2013. Using matrices preserving interlacing properties of nonnegative polynomial sequences,
Br\"anden also established the real-rootedness of $D_n(x)$.         
Combining Hermite-Biehler theorem and a result of Borcea and Br\"and\'en on Hurwitz stability, 
Yang and Zhang gave another proof of the real-rootedness of $D_n(x)$. 
By constructing half Eulerian polynomials of type $D$, Hyatt reproved Brenti's conjecture. 
As originally suggested by Brenti in 1994, it is possible that the 
real-rootedness of $D_n(x)$ may be established by using a more precise knowledge of the location of zeros of the types $A$ and $B$ Eulerian polynomials. 
In this paper, we add more details to the first proof of the real-rootedness of $D_n(x)$ that was provided by the author in 2012, which yields 
the half interlacing property among the types $A,B$ and $D$ Eulerian polynomials.
\end{abstract}


\begin{keyword}
Interlacing zeros \sep Real-rootedness \sep Eulerian polynomials \sep Derivative polynomials 
\MSC[2010] 26D05\sep 05A15
\end{keyword}
\end{frontmatter}

\section{Introduction}
Eulerian polynomials have been objects of study for centuries~\cite{Petersen15}.
They are not only of interest in combinatorics~\cite{Brenti94,Chow08,Liu07}, 
but also of significance in geometry~\cite{Branden22,Savage15} and algebra~\cite{Brenti94,Stembridge94}.
The Eulerian polynomials of types $A$ and $B$ can be respectively defined by
\begin{equation}\label{Anx-recu}
\begin{split}
A_{n+1}(x)&=(nx+1)A_{n}(x)+x(1-x)\frac{\mathrm{d}}{\mathrm{d}x}A_{n}(x),\\
B_{n+1}(x)&=(2nx+x+1)B_{n}(x)+2x(1-x)\frac{\mathrm{d}}{\mathrm{d}x}B_{n}(x),
\end{split}
\end{equation}
with $A_0(x)=B_0(x)=1$ (see~\cite{Brenti94,Chow08}). 
In~\cite[Lemma 9.1]{Stembridge94}, Stembridge found that the type $D$ Eulerian polynomials $D_n(x)$ can be computed by the following identity:
\begin{equation}\label{Dnx-recu}
D_n(x)=B_n(x)-n2^{n-1}xA_{n-1}(x) \quad {\text{ for $n\geqslant 2$}}.
\end{equation}
Clearly, $\deg A_n(x)=n-1$, $\deg B_n(x)=n$ and $\deg D_n(x)=n$. 
It is well known that the types $A$, $B$ and $D$ Eulerian polynomials share several similar properties, see~\cite{Brenti94,Petersen15,Savage15}.

The real-rootedness of $A_n(x)$ was first established by Frobenius in 1910, see~\cite{Petersen15} for instance. 
In 1994, Brenti showed that $B_n(x)$ has only simple negative zeros~\cite[Corollary~3.7]{Brenti94}, and conjectured that $D_n(x)$ has only real zeros~\cite[Conjecture~5.1]{Brenti94}, which became a long-standing conjecture.
Below are the polynomials $D_n(x)$ for $2\leqslant n\leqslant 4$: 
\begin{align*}
  D_2(x)&=1+2x+x^2,\\
  D_3(x)&=1+11x+11x^2+x^3,\\
  D_4(x)& =1+44x+102x^2+44x^3+x^4,\\
  D_5(x)&=1+157x+802x^2+802x^3+157x^4+x^5.
\end{align*}

In~\cite{Ma12}, using derivative polynomials, the author gave the first proof of the real-rootedness of $D_n(x)$, and in this paper we shall add more details 
to the proofs of~\cite[Eq.~(3.2),~Eq.~(3.3)]{Ma12}. 
Subsequently, by constructing a set of compatible polynomials over $s$-inversion sequences,
Savage and Visontai~\cite{Savage15} gave a proof of the real-rootedness of $D_n(x)$. Using matrices preserving interlacing properties of nonnegative polynomial sequences,
Br\"anden~\cite{Branden15} gave another proof of the real-rootedness of $D_n(x)$.  
Combining Hermite-Biehler theorem and a result of Borcea and Br\"and\'en on Hurwitz stability, 
Yang and Zhang~\cite{Yang15} gave another proof of the real-rootedness of $D_n(x)$. 
Inspired by Savage and Visontai's proof, using the mutual interlacing method, 
Yang and Zhang~\cite{Yang17} established the real-rootedness of $D_n(x,q)$ for any $q>0$, where $q$ marks the negative numbers of even signed permutations. 
By constructing half Eulerian polynomials of type $D$, Hyatt~\cite{Hyatt16} reproved Brenti's conjecture. 

As Brenti~\cite{Brenti94} wrote: ``It is possible that~\cite[Conjecture~5.1]{Brenti94} may be solved using a more precise knowledge of 
the location of zeros of the types $A$ and $B$ Eulerian polynomials''. 
Based on~\eqref{Dnx-recu}, we are now ready to give an affirmative answer to Brenti's suggestion.

\begin{theorem}\label{Thm01}
For any $n\geqslant 2$, 
the type $D$ Eulerian polynomial has only simple negative zeros and there is a half interlacing property among $A_{n-1}(x)$, $B_n(x)$ and $D_n(x)$. More precisely, 
we write 
\begin{equation}
\begin{aligned}
A_{n-1}(x)&=\prod_{i=1}^{n-2}(x-a_i),~
B_{n}(x)=\prod_{i=1}^{n}(x-b_i),~
D_{n}(x)=\prod_{i=1}^{n}(x-d_i),
\end{aligned}
\end{equation}
where $a_1<a_2<\cdots \cdots <a_{n-2}<0,~b_1<b_2<\cdots \cdots <b_{n}<0$ and $d_1<d_2<\cdots \cdots <d_{n}<0$.
When $n=2m$, we have $d_m<-1$, $-1<d_{m+1}<0$ and
\begin{equation}\label{zeros01}
\begin{aligned}
b_1<d_1<a_1<b_2<d_2<a_2<&\cdots <a_{m-1}<b_m<d_m<d_{m+1}<b_{m+1}<a_m<\\
&d_{m+2}<b_{m+2}<a_{m+1}<\cdots<a_{2m-2}<d_{2m}<b_{2m};
\end{aligned}
\end{equation}
When $n=2m+1$, we have $a_m=b_{m+1}=d_{m+1}=1$ and
\begin{equation}\label{zeros02}
\begin{aligned}
b_1<d_1<a_1<b_2<d_2<a_2<&\cdots <a_{m-1}<b_m<d_m<d_{m+1}=b_{m+1}=a_m<\\
&d_{m+2}<b_{m+2}<a_{m+1}<\cdots<a_{2m-1}<d_{2m+1}<b_{2m+1}.
\end{aligned}
\end{equation}
In other words, the zeros in the interval $(-\infty,-1)$ and $(-1,0)$ are respectively interlacing. 
\end{theorem}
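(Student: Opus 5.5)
The plan is to locate the zeros of $D_n(x)$ by evaluating the identity \eqref{Dnx-recu}, $D_n(x)=B_n(x)-n2^{n-1}xA_{n-1}(x)$, at the zeros of $A_{n-1}(x)$ and $B_n(x)$, and then to count sign changes. First, all three polynomials are palindromic: $x^{n-2}A_{n-1}(1/x)=A_{n-1}(x)$, $x^nB_n(1/x)=B_n(x)$, and therefore also $x^nD_n(1/x)=D_n(x)$ by \eqref{Dnx-recu}. So zeros come in reciprocal pairs $r\leftrightarrow 1/r$, and it suffices to work on $(-\infty,-1)$ and to handle the point $-1$ separately. When $n=2m+1$ is odd, $-1$ is a zero of each palindromic polynomial of odd degree, so $a_m=b_{m+1}=d_{m+1}=-1$. (I read the ``$=1$'' in the statement as a misprint for $-1$.) When $n=2m$ is even, I will check directly that $D_{2m}(-1)\neq 0$. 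The sign of $D_{2m}(-1)$, together with palindromicity, forces exactly one zero in $(-1,0)$ adjacent to $d_m<-1$.

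The core step is the half interlacing of $A_{n-1}$ and $B_n$ themselves: on $(-\infty,-1)$ we need $b_1<a_1<b_2<a_2<\cdots<a_{m-1}<b_m$, and by reciprocity we then get the mirrored pattern $a_m<b_{m+2}<a_{m+1}<\cdots$ on $(-1,0)$. To prove it, I would use the derivative-polynomial representation from \cite{Ma12}. Write $\frac{d^n}{d\theta^n}\tan\theta=P_n(\tan\theta)$ and $\frac{d^n}{d\theta^n}\sec\theta=\sec\theta\,Q_n(\tan\theta)$. Then, after the Möbius substitution $y=(1+x)/(1-x)$ (up to factors $(1-x)^{k}$, powers of $2$, and a rotation $y\mapsto iy$ that turns the purely imaginary zeros of $P_n,Q_n$ into real ones), $A_{n-1}$ corresponds to $P_{n-1}$ and $B_n$ to $Q_n$. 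The interlacing of the zeros of $P_{n-1}$ and $Q_n$ follows from the recurrences $P_{n+1}=(1+y^2)P_n'$ and $Q_{n+1}=(1+y^2)Q_n'+yQ_n$ by a standard Rolle-type induction. The substitution maps $(-\infty,-1)$ and $(-1,0)$ onto the two halves of the transformed axis, and this is exactly why the interlacing is only ``half'': it flips orientation at $-1$. I expect this step, namely transporting the interlacing faithfully through the substitution and tracking the index shift across $-1$, to be the main obstacle and where most of the added detail goes.

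Given that, the counting is routine. At a zero $b_i$ of $B_n$ we have $D_n(b_i)=-n2^{n-1}b_iA_{n-1}(b_i)$, whose sign equals that of $A_{n-1}(b_i)$ because $b_i<0$. At a zero $a_i$ of $A_{n-1}$ we have $D_n(a_i)=B_n(a_i)$. Since the $a$'s and $b$'s alternate on $(-\infty,-1)$, the signs of $A_{n-1}(b_i)$ alternate in $i$. Comparing with the signs of $B_n(a_i)$, I will check that $D_n$ changes sign on each interval $(b_i,a_i)$, using the leading coefficients and the fact that $A_{n-1}$ and $B_n$ have opposite sign patterns between consecutive roots. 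This gives $d_i\in(b_i,a_i)$ for $i\le m-1$. The one remaining zero $d_m\in(b_m,-1)$ comes from the sign of $D_n$ at $b_m$ compared with its sign at $-1$. Reciprocity then produces the mirrored zeros $d_{m+1},\ldots,d_n$ in the intervals $(a_{j},b_{j+2})$ of $(-1,0)$. Altogether this exhibits $n=\deg D_n$ distinct negative zeros, which proves simplicity and negativity and yields \eqref{zeros01} and \eqref{zeros02}. The small cases $n=2,3$ are checked from the listed polynomials.
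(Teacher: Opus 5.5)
Your top-level reduction matches the paper's. You evaluate $D_n=B_n-n2^{n-1}xA_{n-1}$ at the zeros of $A_{n-1}$ and $B_n$, exploit the reciprocal symmetry (which becomes even/odd symmetry of $\widetilde A_n,\widetilde B_n,\widetilde D_n$ in the paper), and use the sign of $D_n$ at $-1$. Your bookkeeping $\sgn D_n(b_i)=(-1)^{n-1-i}$, $\sgn D_n(a_i)=(-1)^{n-i}$ is correct.

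The gap is the step you call the core: the half interlacing of $A_{n-1}$ and $B_n$. This is exactly Theorem~\ref{thm02}, the main content of the paper, and it does not follow from a standard Rolle-type induction on $P_{n+1}=(1+y^2)P_n'$ and $Q_{n+1}=(1+y^2)Q_n'+yQ_n$. Rolle applied to each recurrence separately gives only interlacing within each family ($\widetilde A_{n-1}\preceq\widetilde A_n$ and $\widetilde B_{n-1}\prec\widetilde B_n$, Lemma~\ref{Hetyei}). It says nothing about where the new zeros of one family fall relative to those of the other.

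Concretely, use the notation of the proof of Theorem~\ref{thm02}, where $y=x^2$, $\widetilde A_{2n-1}(x)=f_n(y)$ and $\widetilde B_{2n}(x)=g_n(y)$. Start from $1=s_1^2>r_1^2>s_2^2>\cdots>s_n^2>r_n^2>0$. Rolle puts the zero $a_{j+1}^2$ of $(y-1)f_n'$ in $(s_{j+1}^2,s_j^2)$ and the zero $b_j^2$ of $g_n+2(y-1)g_n'$ in $(r_j^2,r_{j-1}^2)$, with $r_0=1$. Both can lie in the common piece $(r_j^2,s_j^2)$, in either order.

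To decide the order you need the sign of $g_n+2(y-1)g_n'$ at the critical points of $f_n$. The paper gets it from the partial-fraction inequality $R(\gamma)+2(\gamma-1)R'(\gamma)>0$ for $R=g_n/f_n$ (Lemma~\ref{lemma-key}). This is used in both halves of a two-step induction, the second time at the zeros $\lambda_i$ of the auxiliary polynomial $p_n+2yp_n'$. Note also that in the variable $x$ the zeros of $\widetilde A_{2n-1}$ and $\widetilde B_{2n}$ do not properly interlace, since the two middle zeros $\pm r_n$ of $\widetilde B_{2n}$ are adjacent. Only after passing to $y=x^2$ does the interlacing become proper, as Lemma~\ref{lemma-key} requires. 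By contrast, transporting zeros through the M\"obius map, which you single out as the main obstacle, is trivial: it is an increasing bijection of $(-1,1)$ onto $(-\infty,0)$.

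The point $-1$ also needs more than you state.
\begin{itemize}
\item For $n=2m$ you need $\sgn D_{2m}(-1)=(-1)^m$, not just $D_{2m}(-1)\neq0$. Since $D_{2m}(-1)=(-1)^m2^{2m}\bigl(|E_{2m}|-mT_{2m-1}\bigr)$, with $|E_{2m}|$ the secant numbers and $T_{2m-1}$ the tangent numbers, this is a genuine inequality, which the paper takes from Chow, see \eqref{chow}.
\item For $n=2m+1$ one has $D_n(-1)=0$, so comparing the sign at $b_m$ with ``the sign at $-1$'' gives nothing. You need the sign of $D_n$ just to the left of $-1$, i.e.\ $\sgn D_{2m+1}'(-1)=(-1)^m$, equivalently the sign of the linear coefficient of $\widetilde D_{2m+1}$. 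This is another tangent/secant-number inequality.
\item $D_2(x)=(1+x)^2$ has a double zero at $-1$, so the statement is false for $n=2$. Your plan to check $n=2$ from the listed polynomials would fail; the result, like Chow's formula, holds only for $n\geqslant 3$.
\end{itemize}
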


We say that two polynomials are {\it half interlacing} if their zeros satisfy~\eqref{zeros01} or~\eqref{zeros02}, 
since there are two zeros of a polynomial appear consecutively in the middle of all zeros. 
\begin{example}
When $n=7$, we have 
\begin{align*}
A_6(x)&=1+57x+302x^2+302x^3+57x^4+x^5,\\
B_7(x)&=1 + 2179x + 60657x^2 + 259723x^3 + 259723x^4 + 60657x^5 + 2179x^6 + x^7\\
D_7(x)&=1+1731x+ 35121x^2+ 124427x^3+ 124427x^4+ 35121x^5+ 1731x^6+x^7.
\end{align*}
Then their zeros satisfy~\eqref{zeros02}, since
\begin{align*}
a_1&=-51.2184,~a_2=-4.54193,~a_3=-1,~a_4=-0.220171,~a_5=-0.0195242;\\
b_1&=-2150.85,~b_2=-23.1557,~b_3=-3.67364,~b_4=-1,\\
b_5&=-0.27221,~b_6=-0.0431859,~b_7=-0.000464931;\\
d_1&=-1710.51,~d_2=-16.2985,~d_3=-2.7683,~d_4=-1,\\
d_5&=-0.361232,~d_6=-0.0613554,~d_7=-0.000584621,
\end{align*}
\end{example}

\section{Proof of Theorem~\ref{Thm01}}
It is well known that
$$\sum_{n=0}^{\infty}A_n(-1)\frac{x^n}{n!}=1+\tanh (x),~\sum_{n=0}^{\infty}B_n(-1)\frac{x^n}{n!}=\sech (2x),$$
see~\cite{Hirzebruch08} for instance.
Intuitively, as discussed in~\cite{Han25,Hoffman95}, one should tackle both of the hyperbolic tangent and hyperbolic secant at the same time.
The derivative polynomials for hyperbolic tangent and hyperbolic secant obey this principle.
We now define
\begin{equation*}\label{derivapoly-2}
\frac{\mathrm{d}^n}{\mathrm{d}\theta^n}\tanh \theta={P}_n(\tanh \theta)\quad {\text and}\quad
\frac{\mathrm{d}^n}{\mathrm{d}\theta^n}\sech \theta=\sech\theta \cdot{Q}_n(\tanh \theta).
\end{equation*}
By the chain rule, it is routine to check that
 \begin{equation}\label{Differential07}
	\left\{
	\begin{array}{l}
\widetilde{P}_{n+1}(x)=(1-x^2)\frac{\mathrm{d}}{\mathrm{d}x}\widetilde{P}_n(x),~\widetilde{P}_0(x)=x\\
\widetilde{Q}_{n+1}(x)=(1-x^2)\frac{\mathrm{d}}{\mathrm{d}x}\widetilde{Q}_n(x)-x\widetilde{Q}_n(x),~\widetilde{Q}_0(x)=1.
	\end{array}
	\right.
	\end{equation}
For $n\geqslant 1$, we define 
 \begin{equation}
\begin{aligned}
\widetilde{A}_n(x)&=(x-1)(x+1)^{n}A_{n}\left(\frac{x-1}{x+1}\right),\\
\widetilde{B}_n(x)&=\left(\frac{x+1}{2}\right)^nB_n\left(\frac{x-1}{x+1}\right),\\
\widetilde{D}_n(x)&=\left(\frac{x+1}{2}\right)^nD_n\left(\frac{x-1}{x+1}\right).
\end{aligned}
\end{equation}
Clearly, $D_n(x)\in\rz(-\infty,0)$ if and only if $\widetilde{D}_n(x)\in\rz(-1,1)$. 
Substituting these two expressions
into~\eqref{Anx-recu} and simplifying, we obtain
 \begin{equation}\label{Anx-recu02}
\begin{aligned}
\widetilde{A}_{n+1}(x)&=(x^2-1)\frac{\mathrm{d}}{\mathrm{d}x}\widetilde{A}_n(x),~\widetilde{A}_0(x)=x\\
\widetilde{B}_{n+1}(x)&=(x^2-1)\frac{\mathrm{d}}{\mathrm{d}x}\widetilde{B}_n(x)+x\widetilde{B}_n(x),~\widetilde{B}_0(x)=1.
\end{aligned}
	\end{equation}
Below are the first few of these polynomials:
$$\widetilde{A}_1(x)=x^2-1,~\widetilde{A}_2(x)=2x^3-2x,~\widetilde{A}_3(x)=6x^4-8x^2+2;$$
$$\widetilde{B}_1(x)=x,~\widetilde{B}_2(x)=2x^2-1,~\widetilde{B}_3(x)=6x^3-5x.$$
By induction, it is easy to verify that $\widetilde{A}_{n}(x)$ and $\widetilde{B}_n(x)$ have the following expressions:
\begin{equation*}\label{any}
\widetilde{A}_n(x)=\sum_{k=0}^{\lfloor(n+1)/2\rfloor}(-1)^kp(n,n-2k+1)x^{n-2k+1},~
\widetilde{B}_n(x)=\sum_{k=0}^{\lfloor{n/2}\rfloor}(-1)^kq(n,n-2k)x^{n-2k},
\end{equation*}
where $p(n,n+1)=q(n,n)=n!$. 
Compare~\eqref{Differential07} and~\eqref{Anx-recu02}, we see that
\begin{equation}\label{anx}
\widetilde{A}_n(x)=(-1)^n\widetilde{P}_{n}(x)\quad {\text and}\quad \widetilde{B}_n(x)=(-1)^n\widetilde{Q}_{n}(x),
\end{equation}
see~\cite[Theorem~5,~Theorem~6]{Franssens07} and~\cite[Theorem~10]{Han25} for equivalent formulas.
An equivalent formula of~\eqref{Dnx-recu} is given as follows:
\begin{proposition}\label{recu-dnx}
For $n\geqslant 2$, we have
\begin{equation}\label{dnxpnxqnx}
\widetilde{D}_n(x)=\widetilde{B}_n(x)-\frac{n}{2}\widetilde{A}_{n-1}(x).
\end{equation}
\end{proposition}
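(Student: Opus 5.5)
The plan is a direct substitution: apply the change of variables $y=\frac{x-1}{x+1}$, which defines $\widetilde{A}_n$, $\widetilde{B}_n$ and $\widetilde{D}_n$, to Stembridge's identity~\eqref{Dnx-recu}, then match normalizing factors. No induction or use of the recurrences~\eqref{Anx-recu02} should be needed. The identity is purely algebraic, so the argument is bookkeeping of powers of $2$ and of $x+1$.

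First, fix $n\geqslant 2$ and evaluate~\eqref{Dnx-recu} at $y=\frac{x-1}{x+1}$. This gives
\[
D_n(y)=B_n(y)-n2^{n-1}\,y\,A_{n-1}(y).
\]
Multiply both sides by $\left(\frac{x+1}{2}\right)^n$. By definition, the left-hand side becomes $\widetilde{D}_n(x)$ and the first term on the right becomes $\widetilde{B}_n(x)$.

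For the remaining term, I compute
\[
n2^{n-1}\left(\frac{x+1}{2}\right)^n\frac{x-1}{x+1}A_{n-1}(y)=\frac{n}{2}(x-1)(x+1)^{n-1}A_{n-1}\!\left(\frac{x-1}{x+1}\right).
\]
Since $n-1\geqslant 1$, the definition of $\widetilde{A}_{n-1}$ applies. The right-hand side above is therefore exactly $\frac{n}{2}\widetilde{A}_{n-1}(x)$, which yields~\eqref{dnxpnxqnx}.

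The only step needing care is this matching of factors. The power $2^{n-1}$ must cancel against $2^{-n}$ to leave $\frac12$. The factor $y=\frac{x-1}{x+1}$ must absorb one power of $(x+1)$, so that $(x+1)^n$ becomes the $(x+1)^{n-1}$ required by $\widetilde{A}_{n-1}$. It also supplies the extra factor $(x-1)$ built into the definition of $\widetilde{A}_{n-1}$.

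I would also note that the identity holds as rational functions for $x\neq -1$. Since every term is a polynomial in $x$, it then holds as a polynomial identity. As a sanity check, take $n=2$: $\widetilde{B}_2(x)-\widetilde{A}_1(x)=(2x^2-1)-(x^2-1)=x^2$. This agrees with $\left(\frac{x+1}{2}\right)^2D_2(y)=\left(\frac{x+1}{2}\right)^2(1+y)^2=x^2$.
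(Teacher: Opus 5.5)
Your proof is correct and follows essentially the paper's approach: the paper gives no separate argument and presents Proposition~\ref{recu-dnx} as the form Stembridge's identity~\eqref{Dnx-recu} takes under the substitution $y=\frac{x-1}{x+1}$. Your bookkeeping, $2^{n-1}\cdot 2^{-n}=\frac12$ and $y\,(x+1)^n=(x-1)(x+1)^{n-1}$, is exactly what that substitution requires, and your check that $n-1\geqslant 1$ (so the closed-form definition of $\widetilde{A}_{n-1}$ applies) is well placed, since at $n=0$ the paper's initial value $\widetilde{A}_0(x)=x$ does not agree with that formula.
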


For $n\geqslant 3$, Chow~\cite[Corollary 6.10]{Chow08} obtained that
\begin{equation}\label{chow}
\sgn D_n(-1)=\begin{cases}
0 & \text{if $n$ is odd},\\
(-1)^{\frac{n}{2}} & \text{if $n$ is even}.
\end{cases}
\end{equation}
It follows from~\eqref{Anx-recu02} and~\eqref{dnxpnxqnx} that $\widetilde{D}_n(-1)=\widetilde{B}_n(-1)=(-1)^n$ for $n\geqslant 2$. 
The first few terms of $\widetilde{D}_n(x)$ can be computed directly as follows:
\begin{align*}
  \widetilde{D}_2(x)&=x^2,~
 \widetilde{D}_3(x)=3x^3-2x,~
  \widetilde{D}_4(x)=12x^4-12x^2+1,\\
  \widetilde{D}_5(x)& =60x^5-80x^3+21x,~
  \widetilde{D}_6(x) =360x^6-600x^4+254x^2-13.
\end{align*}

Let $\rz$ denote the set of real polynomials with only real zeros.
Denote by $\rz(I)$ the set of such polynomials all
whose zeros are in the interval $I$. Suppose that $f,F\in\rz$. Let $\{s_i\}$ and $\{r_j\}$ be all zeros of $F$ and $f$ in nonincreasing
order respectively. Following~\cite{Liu07}, we say that $F$ {\it interleaves} $f$, denoted by
$f\preceq F$, if $\deg f\leqslant\deg F\leqslant \deg f+1$ and
\begin{equation}\label{sep}
s_1\geqslant r_1\geqslant s_2\geqslant r_2\geqslant s_3\geqslant r_3\geqslant\cdots.
\end{equation}
If no equality sign occurs in~\eqref{sep}, then we say that $F$ {\it strictly interleaves} $f$.
Let $f\seps F$ denote $F$ strictly interleaves $f$.

Combining~\eqref{anx} and~\cite[Proposition 6.5,~Theorem 8.6]{Hetyei08}, we have 
the following result.
\begin{lemma}[{\cite[Theorem 8.6]{Hetyei08}}]\label{Hetyei}
For $n\geqslant 2$, the zeros $-1<u_1<u_2<\cdots<u_{n-1}<1$ of $\widetilde{A}_{n}(x)$ and the zeros 
$t_1<t_2<\cdots<t_n$ of $\widetilde{B}_{n}(x)$ are all real, belong to the interval $[-1,1]$, and are interlaced
$-1<t_1<u_1<t_2<\cdots <u_{n-1}<t_n<1$.
Moreover, $\widetilde{A}_{n-1}(x)\preceq\widetilde{A}_{n}(x)$ and
$\widetilde{B}_{n-1}(x)\prec\widetilde{B}_{n}(x)$.
\end{lemma}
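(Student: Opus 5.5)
The plan is to prove the statement by induction on $n$, using the recurrences~\eqref{Anx-recu02} together with Rolle's theorem. The key point is that both recurrences become the single operator $(x^2-1)\frac{\mathrm{d}}{\mathrm{d}x}$ once $\widetilde{B}_n$ is multiplied by a suitable weight.

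\emph{Type $A$.} I would prove by induction that for $n\geqslant 2$,
$$\widetilde{A}_n(x)=(x^2-1)R_n(x),$$
where $R_n$ has $n-1$ simple zeros $u_1<\cdots<u_{n-1}$ in $(-1,1)$. The base cases are read off from $\widetilde{A}_2=2x^3-2x$ and $\widetilde{A}_3=2(3x^2-1)(x^2-1)$. For the inductive step, $\widetilde{A}_n$ has the $n+1$ simple real zeros $-1<u_1<\cdots<u_{n-1}<1$ and degree $n+1$. By Rolle's theorem, $\widetilde{A}_n'$ has one zero in each of the $n$ gaps between consecutive zeros. Since $\deg\widetilde{A}_n'=n$, these are all its zeros, and they are simple and lie in $(-1,1)$. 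Hence
$$\widetilde{A}_{n+1}=(x^2-1)\widetilde{A}_n'$$
has zeros $\pm1$ together with $n$ interior zeros that strictly interlace the $u_i$. This gives $\widetilde{A}_{n}\preceq\widetilde{A}_{n+1}$. The interlacing is only weak because the common zeros $\pm1$ produce the equalities in~\eqref{sep}.

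\emph{Type $B$.} Put $h_n(x)=\widetilde{B}_n(x)\sqrt{1-x^2}$ on $[-1,1]$. A direct computation from~\eqref{Anx-recu02} gives
$$(x^2-1)h_n'(x)=\sqrt{1-x^2}\,\bigl((x^2-1)\widetilde{B}_n'(x)+x\widetilde{B}_n(x)\bigr)=h_{n+1}(x).$$
This mirrors $\frac{\mathrm{d}}{\mathrm{d}\theta}$ acting on $\sech\theta\cdot Q_n(\tanh\theta)$, where $\sqrt{1-x^2}$ plays the role of $\sech\theta$.

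Assume inductively that $\widetilde{B}_n$ has $n$ simple zeros $t_1<\cdots<t_n$ in $(-1,1)$; the base case is $\widetilde{B}_1=x$. Then $h_n$ vanishes at the $n+2$ points $-1,t_1,\dots,t_n,1$. Rolle's theorem applies on each closed gap, since $h_n$ is continuous on $[-1,1]$ and differentiable inside. It yields $n+1$ zeros of $h_n'$ in $(-1,1)$, strictly separated by the $t_i$. These are zeros of $\widetilde{B}_{n+1}$, which has degree $n+1$, so they are all of its zeros. They are simple, lie in $(-1,1)$, and strictly interlace the zeros of $\widetilde{B}_n$. This proves $\widetilde{B}_n\prec\widetilde{B}_{n+1}$.

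\emph{Mixed interlacing.} The remaining claim, $t_1<u_1<t_2<\cdots<u_{n-1}<t_n$, is the step I expect to be the main obstacle, because the $A$- and $B$-chains are generated separately. I would first try to prove it with the sign pattern of $\widetilde{B}_n$ at the interior zeros $u_i$ of $\widetilde{A}_n$, showing by induction that $\sgn\widetilde{B}_n(u_i)$ alternates in $i$.

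To produce a relation between the two families, I would work at the level of $\theta$, where $\tanh'=\sech^2$ and $\sech'=-\sech\tanh$. I would look for an identity expressing $P_n$ as a combination of products $Q_jQ_{n-1-j}$, obtained by differentiating $\tanh'=\sech^2$ $n-1$ times with Leibniz's rule. Alternatively, I would look for a Wronskian-type quantity in $\widetilde{A}_n$ and $\widetilde{B}_n$ with constant sign on $(-1,1)$. Either relation would be used to transfer the known locations of the $t_i$ to sign information at the $u_i$.

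As a fallback, I would use that $\widetilde{A}_n$ is a polynomial of parity $n+1$ and $\widetilde{B}_n$ one of parity $n$. The two zero sets are then symmetric about $0$, so it suffices to locate the zeros in $[0,1)$, where a counting argument based on the two Rolle interlacings may already close the gap.
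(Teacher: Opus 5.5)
\textbf{There is a genuine gap: the mixed interlacing is not proved.}

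Your type $A$ and type $B$ arguments are correct and self-contained. The weight $h_n=\sqrt{1-x^2}\,\widetilde{B}_n$ with $(x^2-1)h_n'=h_{n+1}$ is a clean way to run Rolle. Together they give the location of all zeros in $[-1,1]$, $\widetilde{A}_{n-1}\preceq\widetilde{A}_n$, and $\widetilde{B}_{n-1}\prec\widetilde{B}_n$. The paper proves none of this itself; it imports the whole lemma from Hetyei via $\widetilde{A}_n=(-1)^n\widetilde{P}_n$ and $\widetilde{B}_n=(-1)^n\widetilde{Q}_n$.

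The missing step is $t_1<u_1<\cdots<u_{n-1}<t_n$, which is the real content of the lemma. You list three strategies and carry out none of them. The fallback cannot work, because the two Rolle chains never compare the two families. Even if you assume the mixed interlacing at step $n$, Rolle only tells you two things:
\begin{itemize}
\item the $i$-th zero $v_i$ of $\widetilde{A}_n'$ and the zero $t_i$ both lie in $(u_{i-1},u_i)$, with $u_0=-1$ and $u_n=1$;
\item the zero $w_i$ of $\widetilde{B}_{n+1}$ lies in $(t_{i-1},t_i)$.
\end{itemize}
Nothing here forces $w_i<v_i<w_{i+1}$, and parity only halves the bookkeeping. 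Likewise, the Leibniz identity writing $\widetilde{P}_n$ as a combination of products $\widetilde{Q}_j\widetilde{Q}_{n-1-j}$ does not by itself give the signs of $\widetilde{B}_n$ at the $u_i$.

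\textbf{How to close it.} Your Wronskian idea is the right one, and it can be made concrete. Put $x=\cos\phi$ with $\phi\in(0,\pi)$. The operator $\sin\phi\,\frac{\mathrm{d}}{\mathrm{d}\phi}$ sends $f(\cos\phi)$ to $\bigl((x^2-1)f'\bigr)(\cos\phi)$. It sends $\sin\phi\,g(\cos\phi)$ to $\sin\phi\,\bigl((x^2-1)g'+xg\bigr)(\cos\phi)$. On functions of $z=e^{i\phi}$ it equals $\frac{z^2-1}{2}\frac{\mathrm{d}}{\mathrm{d}z}$. Apply it $n$ times to $e^{i\phi}=\cos\phi+i\sin\phi$ and compare with \eqref{Anx-recu02}; this gives
\[
\widetilde{A}_n(\cos\phi)+i\sin\phi\,\widetilde{B}_n(\cos\phi)=2^{-n}\widetilde{A}_n(e^{i\phi}).
\]

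By your type $A$ step, every zero $\zeta$ of $\widetilde{A}_n$ is real with $|\zeta|\leqslant 1$. Hence $\frac{\mathrm{d}}{\mathrm{d}\phi}\arg(e^{i\phi}-\zeta)=\frac{1-\zeta\cos\phi}{|e^{i\phi}-\zeta|^2}>0$ on $(0,\pi)$. So $\arg\widetilde{A}_n(e^{i\phi})$ increases strictly from $\pi/2$ to $\pi/2+n\pi$. Each interior zero contributes $\pi$, each of $\pm1$ contributes $\pi/2$, and the leading coefficient $n!$ is positive.

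The argument therefore meets $\pi\mathbb{Z}$ exactly $n$ times; these are the zeros of $\widetilde{B}_n(\cos\phi)$. It meets $\frac{\pi}{2}+\pi\mathbb{Z}$ exactly $n-1$ times; these are the zeros of $\widetilde{A}_n(\cos\phi)$. The two kinds alternate, beginning and ending with the former. Mapping back through the decreasing bijection $\phi\mapsto\cos\phi$ gives $t_1<u_1<\cdots<u_{n-1}<t_n$. The same argument also reproves the type $B$ real-rootedness without using $h_n$.
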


\begin{theorem}\label{thm02}
For $n\geqslant 1$, let
\begin{equation}\label{eqAABB}
\begin{aligned}
\widetilde{A}_{2n-1}(x)&=(2n-1)!\prod_{i=1}^n(x-s_i)(x+s_i),~
\widetilde{A}_{2n}(x)=(2n)!x\prod_{i=1}^n(x-a_i)(x+a_i),\\
\widetilde{B}_{2n}(x)&=(2n)!\prod_{j=1}^n(x-r_j)(x+r_j),~
\widetilde{B}_{2n+1}(x)=(2n+1)!x\prod_{j=1}^{n}(x-b_j)(x+b_j).
\end{aligned}
\end{equation}
Then we have
\begin{equation}\label{zeros-1}
1=s_1> r_1>s_2> r_2> \cdots >r_{n-1}>s_n>r_n>0,
\end{equation}
\begin{equation}\label{zeros-2}
1=a_1> b_1>a_2> b_2> \cdots >b_{n-1}>a_n>b_n>0.
\end{equation}
\end{theorem}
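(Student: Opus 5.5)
The plan is to prove \eqref{zeros-1} and \eqref{zeros-2} together by induction on the index. I would treat them as the single chain of statements ``the zeros of $\widetilde{B}_{k+1}$ strictly interlace the zeros of $\widetilde{A}_k$ in the pattern $1=\alpha_1>\beta_1>\alpha_2>\cdots$'', for $k=1,2,3,\dots$. The two parities alternate, so \eqref{zeros-1} corresponds to odd $k$ and \eqref{zeros-2} to even $k$.

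\textbf{Parity and reduction to $(0,1)$.} From \eqref{Anx-recu02} one sees by induction that $\widetilde{A}_k$ has parity $k+1$ and $\widetilde{B}_k$ has parity $k$. Hence all zeros are symmetric about $0$. By Lemma~\ref{Hetyei} they are real and lie in $[-1,1]$, and $x^2-1$ divides $\widetilde{A}_k$ for $k\ge 1$. So it suffices to compare the zeros in $(0,1]$.

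\textbf{Base cases.} We have $\widetilde{A}_1=x^2-1$ and $\widetilde{B}_2=2x^2-1$, which gives $s_1=1>r_1=1/\sqrt2>0$. Also $\widetilde{A}_2=2x^3-2x$ and $\widetilde{B}_3=6x^3-5x$, which gives $a_1=1>b_1=\sqrt{5/6}>0$.

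\textbf{Rolle form of the recurrences.} I would rewrite \eqref{Anx-recu02} as
\[
\widetilde{A}_{k+1}(x)=-(1-x^2)\,\widetilde{A}_k'(x),\qquad
\widetilde{B}_{k+1}(x)=-(1-x^2)^{1/2}\frac{\mathrm{d}}{\mathrm{d}x}\Bigl[(1-x^2)^{1/2}\widetilde{B}_k(x)\Bigr],
\]
where the second identity is a direct check on $(-1,1)$. Consequently:
\begin{itemize}
\item the interior zeros of $\widetilde{A}_{k+1}$ are the critical points of $\widetilde{A}_k$, one strictly inside each gap between consecutive zeros of $\widetilde{A}_k$;
\item the zeros of $\widetilde{B}_{k+2}$ are the critical points of $g_{k+1}:=(1-x^2)^{1/2}\widetilde{B}_{k+1}$. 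This function vanishes at $\pm1$ and at the zeros of $\widetilde{B}_{k+1}$, so there is exactly one critical point in each gap between consecutive points of $\{-1,1\}\cup Z(\widetilde{B}_{k+1})$.
\end{itemize}
Counting degrees shows that these Rolle points exhaust all the zeros. This is consistent with Lemma~\ref{Hetyei}.

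\textbf{Inductive step and main obstacle.} Assume the interlacing $1=\alpha_1>\beta_1>\alpha_2>\beta_2>\cdots$ between the zeros $\alpha_i$ of $\widetilde{A}_k$ and $\beta_i$ of $\widetilde{B}_{k+1}$. The new zeros are $\alpha'_i$ of $\widetilde{A}_{k+1}$, with $\alpha'_1=1$ and $\alpha'_{i+1}\in(\alpha_{i+1},\alpha_i)$, and $\beta'_i$ of $\widetilde{B}_{k+2}$, with $\beta'_1\in(\beta_1,1)$ and $\beta'_{i+1}\in(\beta_{i+1},\beta_i)$. We must show $1>\beta'_1>\alpha'_2>\beta'_2>\cdots$.

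Rolle alone only places each new zero in an interval. The real work is comparing $\alpha'_{i+1}$ with $\beta'_i$ and $\beta'_{i+1}$, and this is the main obstacle. I would compare logarithmic derivatives:
\[
\frac{\widetilde{A}_k'}{\widetilde{A}_k}=\sum_j\frac{1}{x-\alpha_j},\qquad
\frac{g_{k+1}'}{g_{k+1}}=\sum_j\frac{1}{x-\beta_j}-\frac{x}{1-x^2}
=\sum_j\frac{1}{x-\beta_j}+\frac12\Bigl(\frac1{x-1}+\frac1{x+1}\Bigr).
\]
Each function is strictly decreasing on every gap, so a critical point is where the function crosses $0$. To show $\alpha'_{i+1}<\beta'_i$, it is enough to show that $\widetilde{A}_k'/\widetilde{A}_k$ and $g'_{k+1}/g_{k+1}$ have the appropriate signs at a common point, for instance at $\beta_i$ or at $\alpha_{i+1}$. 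There one of the two sums has a pole, so its sign is forced, and the interlacing hypothesis should fix the sign of the other sum by pairing the terms $\frac1{x-\alpha_j}$ and $\frac1{x-\beta_j}$ termwise.

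If the termwise pairing fails, the fallback is a sign-alternation argument. I would evaluate $\widetilde{B}_{k+2}$ at the points $\alpha'_i$ using $\widetilde{A}_{k+1}=-(1-x^2)\widetilde{A}_k'$. This expresses $\widetilde{B}_{k+2}(\alpha'_i)$ through the values $\widetilde{B}_{k+1}(\alpha'_i)$ and $\widetilde{B}_{k+1}'(\alpha'_i)$, whose signs are controlled by the inductive hypothesis. The goal is to show that these values alternate in sign.
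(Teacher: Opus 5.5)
\textbf{There is a genuine gap: the step you flag as the main obstacle is the whole content of the theorem, and neither of your suggestions closes it.} Your setup is correct: the parity, the Rolle form of the recurrences, and placing each new zero in a gap between old ones.

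\emph{The pole test cannot give the lower inequality.} Your first suggestion tests the signs of $\widetilde{A}_k'/\widetilde{A}_k$ and $g_{k+1}'/g_{k+1}$ at an old zero such as $\beta_i$ or $\alpha_{i+1}$. This cannot prove $\beta'_{i+1}<\alpha'_{i+1}$. Take $k=3$:
\begin{itemize}
\item $\widetilde{A}_3=2(3x^2-1)(x^2-1)$ and $\widetilde{B}_4=24x^4-28x^2+5$, so $\alpha_2=1/\sqrt3\approx0.577$ and $\beta_1\approx0.973$;
\item $\widetilde{A}_4=8x(x^2-1)(3x^2-2)$ and $\widetilde{B}_5=x(120x^4-180x^2+61)$, so $\alpha'_2=\sqrt{2/3}\approx0.816$ and $\beta'_2\approx0.719$.
\end{itemize}
Both new zeros lie in $(\alpha_2,\beta_1)$, which contains no old zero. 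A sign test at an old zero only tells you which side of that zero a new zero lies on, so it cannot order these two. The comparison has to be made at the new point itself.

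At best the pole test gives the upper inequality, via $\alpha'_{i+1}<\beta_i<\beta'_i$. But $\alpha'_{i+1}<\beta_i$ is the $\widetilde{A}_{k+1}$/$\widetilde{B}_{k+1}$ interlacing of Lemma~\ref{Hetyei}, and you give no termwise argument for it.

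\emph{The fallback rests on an unjustified claim.} Showing that $\widetilde{B}_{k+2}$ alternates in sign at the $\alpha'_i$ is exactly the paper's strategy. However, you assert that the signs of $\widetilde{B}_{k+1}(\alpha'_i)$ and $\widetilde{B}_{k+1}'(\alpha'_i)$ are controlled by the inductive hypothesis, and this is not so. The point $\alpha'_i$ is only known to lie in $(\alpha_i,\alpha_{i-1})$, which contains $\beta_{i-1}$. So even the sign of $\widetilde{B}_{k+1}(\alpha'_i)$ is undetermined, and the hypothesis says nothing about $\widetilde{B}_{k+1}'$ at such a point.

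\textbf{The missing idea is to use $\widetilde{A}_k'(\alpha'_i)=0$ and factor through the ratio.} In $y=x^2$ set $\rho(y)=\widetilde{B}_{k+1}(x)/\widetilde{A}_k(x)$; this is $g_n/f_n$ or $q_n/p_n$ in the paper. Since $\widetilde{B}_{k+1}'=\rho\,\widetilde{A}_k'+2x\rho'\widetilde{A}_k$, the recurrence gives, at $x=\alpha'_i$,
$\widetilde{B}_{k+2}(\alpha'_i)=\alpha'_i\,\widetilde{A}_k(\alpha'_i)\bigl(\rho(y)+2(y-1)\rho'(y)\bigr)$.
Read in $y$, all zeros lie in $(0,1]$ and the top one equals $1$. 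The interlacing hypothesis then gives $\rho=c+\sum_j\lambda_j/(y-\alpha_j^2)$ with $c>0$ and $\lambda_j>0$. Hence $\rho+2(y-1)\rho'=c+\sum_j\lambda_j(2-y-\alpha_j^2)/(y-\alpha_j^2)^2>0$ on $(0,1)$, which is Lemma~\ref{lemma-key}. Therefore $\sgn\widetilde{B}_{k+2}(\alpha'_i)=\sgn\widetilde{A}_k(\alpha'_i)=(-1)^{i-1}$. Together with the signs at $y=0$ and $y=1$, this forces the interlacing.

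Your logarithmic-derivative idea can also be rescued, but only at the critical point. At $x^*=\alpha'_{i+1}$ the first logarithmic derivative vanishes. So $g_{k+1}'/g_{k+1}$ at $x^*$ equals $(\log R)'(x^*)-x^*/(1-x^{*2})$, with $R=\widetilde{B}_{k+1}/\widetilde{A}_k$. Pairing $\pm\beta_j$ with $\pm\alpha_j$ gives $(\log R)'(x)=2x\sum_j(\beta_j^2-\alpha_j^2)/\bigl((x^2-\beta_j^2)(x^2-\alpha_j^2)\bigr)$. Every term is negative once $x^*\in(\alpha_{i+1},\beta_i)$, so $g_{k+1}'/g_{k+1}(x^*)<0$ and hence $\beta'_{i+1}<x^*$. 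This works only after you import $\alpha'_{i+1}<\beta_i$ from Lemma~\ref{Hetyei}.
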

Theorem~\ref{thm02} is a key ingredient in our first proof of Theorem~\ref{Thm01}, but the proof of it was omitted in~\cite{Ma12}. 
In the sequel, we shall give a detailed proof of Theorem~\ref{thm02}.

Let $\sgn$ denote the sign function defined on $\R$ by
\begin{equation*}
\sgn x=\begin{cases}
1 & \text{if $x>0$},\\
0 & \text{if $x=0$},\\
-1 & \text{if $x<0$}.
\end{cases}
\end{equation*}
Following Wagner~\cite{Wagner92}, a real polynomial is said to be {\it standard} if either it is identically
zero or its leading coefficient is positive. As usual, let $\frac{\mathrm{d}}{\mathrm{d}x}f(x)=f'(x)$.

First we need to give an inequality involving polynomials with zeros satisfy~\eqref{zeros-1}.
\begin{lemma}\label{lemma-key}
Let $f(x)=c_1\prod_{i=1}^n(x-\alpha_i)$ and $g(x)=c_2\prod_{i=1}^n(x-\beta_i)$ be two standard real polynomials.
Assume that 
\begin{equation}\label{inequality}
1=\alpha_1>\beta_1>\alpha_2>\beta_2>\cdots >\alpha_n>\beta_n>0.
\end{equation}
 Let $R(x)=g(x)/f(x)$. Then for $\gamma \in (0,1)$ and $f(\gamma)\neq 0$, we have
\begin{equation}
R(\gamma)+2(\gamma-1)R'(\gamma)>0.
\end{equation}
\end{lemma}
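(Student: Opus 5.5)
The plan is to use a partial fraction expansion of $R$ instead of working with the logarithmic derivative. This turns the claimed inequality into a sum of terms whose signs can be read off directly.

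First, since $\deg f=\deg g=n$, the zeros $\alpha_i$ of $f$ are simple by \eqref{inequality}, and $f$ and $g$ have no common zeros, we write
\begin{equation*}
R(x)=\frac{g(x)}{f(x)}=c+\sum_{i=1}^n\frac{w_i}{x-\alpha_i},\qquad c=\frac{c_2}{c_1},\qquad w_i=\frac{g(\alpha_i)}{f'(\alpha_i)}.
\end{equation*}
Because $f$ and $g$ are standard and not identically zero, $c_1,c_2>0$, and hence $c>0$.

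Next we determine the sign of each residue $w_i$ from the interlacing \eqref{inequality}.
\begin{itemize}
\item The zeros $\beta_j$ with $\beta_j>\alpha_i$ are exactly $\beta_1,\dots,\beta_{i-1}$. Hence $\sgn g(\alpha_i)=(-1)^{i-1}$.
\item The zeros $\alpha_j$ with $\alpha_j>\alpha_i$ are exactly $\alpha_1,\dots,\alpha_{i-1}$. Hence $\sgn f'(\alpha_i)=(-1)^{i-1}$.
\end{itemize}
Therefore $w_i>0$ for every $i$. This is the only place where the interlacing hypothesis enters, and it is the step that needs care: one must check that strict interlacing makes every residue strictly positive, including $w_1$ at $\alpha_1=1$.

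Differentiating the expansion gives $R'(x)=-\sum_i w_i/(x-\alpha_i)^2$. For $\gamma\in(0,1)$ with $f(\gamma)\neq0$, we then compute
\begin{equation*}
R(\gamma)+2(\gamma-1)R'(\gamma)=c+\sum_{i=1}^n w_i\,\frac{(\gamma-\alpha_i)-2(\gamma-1)}{(\gamma-\alpha_i)^2}
=c+\sum_{i=1}^n w_i\,\frac{2-\alpha_i-\gamma}{(\gamma-\alpha_i)^2}.
\end{equation*}
Since $\alpha_i\leqslant\alpha_1=1$ and $\gamma<1$, each numerator satisfies $2-\alpha_i-\gamma>0$. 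For $i=1$ the term is simply $w_1/(1-\gamma)$. Every summand is therefore positive, and together with $c>0$ this gives the claimed strict inequality.

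This argument does not require treating the case $g(\gamma)=0$ separately, since the expansion is valid at every point where $f(\gamma)\neq0$. The hypothesis $\alpha_1=1$ is precisely what guarantees $2-\alpha_i-\gamma>0$ for all $\gamma\in(0,1)$. The hypothesis $\beta_n>0$ is only needed to place all zeros in the stated configuration.
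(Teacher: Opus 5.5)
Your proof is correct and is essentially the paper's own argument. It uses the same partial fraction expansion $R(x)=c_2/c_1+\sum_i\lambda_i/(x-\alpha_i)$ with $\lambda_i=g(\alpha_i)/f'(\alpha_i)>0$ (since $g(\alpha_i)$ and $f'(\alpha_i)$ both have sign $(-1)^{i-1}$), followed by the same regrouping into $c_2/c_1+\sum_i\lambda_i(2-\gamma-\alpha_i)/(\gamma-\alpha_i)^2>0$.
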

\begin{proof}
Since $\deg f(x)=\deg g(x)=n$, the partial fraction expansion of $R(x)$ has the form
\begin{equation}\label{Rx}
R(x)=\frac{c_2}{c_1}+\sum_{i=1}^n\frac{\lambda_i}{x-\alpha_i}.
\end{equation}
where $\lambda_i=\frac{g(\alpha_i)}{f'(\alpha_i)}$. It follows from~\eqref{inequality} that $\sgn f'(\alpha_i)=\sgn g(\alpha_i)=(-1)^{i-1}$ 
and so $\lambda_i>0$ for $1\leqslant i\leqslant n$. Differentiating both sides of~\eqref{Rx} with respect to $x$, we obtain
$$R'(x)=-\sum_{i=1}^n\frac{\lambda_i}{(x-\alpha_i)^2}.$$
Therefore, we get
\begin{align*}
R(\gamma)+2(\gamma-1)R'(\gamma)&=\frac{c_2}{c_1}+\sum_{i=1}^n\frac{\lambda_i}{\gamma-\alpha_i}-2(\gamma-1)\sum_{i=1}^n\frac{\lambda_i}{(\gamma-\alpha_i)^2}\\
&=\frac{c_2}{c_1}+\sum_{i=1}^n\frac{\lambda_i(2-\gamma-\alpha_i)}{(\gamma-\alpha_i)^2}>0,
\end{align*}
as desired. This completes the proof.
\end{proof}

\noindent{\bf Proof of Theorem~\ref{thm02}:}
\begin{proof}
It is routine to verify that the result holds for $n=1,2,3$. We proceed by induction. In the following discussion, we always 
set $y=x^2$.
Let $\widetilde{A}_{2n-1}(x)=f_n(y)$ and $\widetilde{B}_{2n}(x)=g_n(y)$, 

Assume that~\eqref{zeros-1} holds. It should be noted that~\eqref{zeros-1} is equivalent to
\begin{equation}\label{zeros-01}
1=s_1^2> r_1^2>s_2^2> r_2^2> \cdots >r_{n-1}^2>s_n^2>r_n^2>0.
\end{equation}
Using~\eqref{Anx-recu02}, we get 
\begin{align*}
\widetilde{A}_{2n}(x)=2x(y-1)f_n'(y),~~
\widetilde{B}_{2n+1}(x)&=x\left(g_n(y)+2(y-1)g_n'(y)\right).
\end{align*}

Let $F(y)=(y-1)f_n'(y)$ and $G(y)=g_n(y)+2(y-1)g_n'(y)$. By~\eqref{eqAABB}, we see that
the zeros of $F(y)$ are $a_1^2>a_2^2>\cdots>a_n^2$ and that of $G(y)$
are $b_1^2>b_2^2>\cdots>b_n^2$. 
From~\eqref{eqAABB}, we see that the zeros of $f_n(y)$ are $s_1^2>s_2^2>\cdots>s_n^2$.
By Rolle's Theorem, one has $f_n'(y)\prec f_n(y)$. Hence
$$f_n'(a_i^2)=0,~s_{i-1}^2>a_{i}^2>s_{i}^2$$ for $2\leqslant i \leqslant n$, which can also be deduced by Lemma~\ref{Hetyei}.

Let $R_n(y)={g_n(y)}/{f_n(y)}$. Then we have
\begin{align*}
G(a_i^2)&=g_n(a_i^2)+2(a_i^2-1)g_n'(a_i^2
)\\
&=R_n(a_i^2)f_n(a_i^2)+2(a_i^2-1)f_n(a_i^2)R_n'(a_i^2)\\
&=f_n(a_i^2)\left(R_n(a_i^2)+2(a_i^2-1)R_n'(a_i^2)\right).
\end{align*}
By Lemma~\ref{lemma-key}, we find that $\sgn G(a_i^2)=\sgn f_n(a_i^2)=(-1)^{i-1}$ for $2\leqslant i \leqslant n$.
Clearly, $G(1)=g_n(1)>0$. Recall that $\widetilde{B}_{2n}(x)=g_n(y)$. So we have 
$$\frac{g_n'(y)}{g_n(y)}=\sum_{i=1}^n\frac{1}{y-r_i^2}.$$
Note in particular that $\frac{g_n'(0)}{g_n(0)}=-\sum_{i=1}^n\frac{1}{r_i^2}$. Therefore, we obtain
$$G(0)=g_n(0)-2g_n'(0)=g_n(0)\left(1+2\sum_{i=1}^n\frac{1}{r_i^2}\right),$$
which yields $\sgn G(0)=\sgn g_n(0)=(-1)^n$. So $G(y)$ has a zero in each of the intervals 
$(0,a_n^2),~(a_2^2,a_3^2),\ldots,~(a_3^2,a_2^2),~(a_2^2,1)$.
i.e., let $b_1^2>b_2^2>\cdots>b_n^2$ be the desired zeros, then 
\begin{equation}\label{zeros04}
1=a_1^2>b_1^2>a_2^2>b_2^2>a_3^2>\cdots>b_{n-1}^2>a_n^2>b_n^2>0,
\end{equation}
which is in agreement with~\eqref{zeros-2}. 

In the same way, assume that~\eqref{zeros-2} holds, we shall show that~\eqref{zeros-1} also holds for $n+1$.
Let $\widetilde{A}_{2n}(x)=xp_n(y)$ and $\widetilde{B}_{2n+1}(x)=xq_n(y)$. The zeros $a_1^2>a_2^2>\cdots>a_n^2$ of $p_n(y)$ and 
the zeros $b_1^2>b_2^2>\cdots>b_n^2$ of $q_n(y)$ satisfy~\eqref{zeros04}.
It follows from~\eqref{Anx-recu02} that 
\begin{align*}
\widetilde{A}_{2n+1}(x)&=(x^2-1)\widetilde{A}_{2n}'(x)=(y-1)\left(p_n(y)+2yp_n'(y)\right),\\
\widetilde{B}_{2n+2}(x)&=(x^2-1)\frac{\mathrm{d}}{\mathrm{d}x}\widetilde{B}_{2n+1}(x)+x\widetilde{B}_{2n+1}(x)=(2y-1)q_n(y)+2y(y-1)q_n'(y).
\end{align*}

Note that $\sgn \left(p_n(a_i^2)+2a_i^2p_n'(a_i^2)\right)=\sgn \left(p_n'(a_i^2)\right)=(-1)^{i-1}$ for $1\leqslant i\leqslant n$, and 
$\sgn \left(p_n(0))\right)=(-1)^n$.
So we find that $p_n(y)+2yp_n'(y)$ has one zero in each of the interval $$(0,a_n^2),~(a_n^2,a_{n-1}^2),\ldots,(a_3^2,a_2^2),(a_2^2,a_1^2).$$
Let $\lambda_1>\lambda_2>\cdots>\lambda_n$ be the zeros of $p_n(y)+2yp_n'(y)$. Then 
\begin{equation}\label{zeros05}
a_1^2>\lambda_1>a_2^2>\lambda_2>\cdots >a_{n-1}^2>\lambda_{n-1}>a_n^2>\lambda_n>0.
\end{equation}

Since $p_n(\lambda_i)+2\lambda_ip_n'(\lambda_i)=0$, it follows that 
\begin{equation}\label{zeros10}
p_n'(\lambda_i)=-\frac{p_n(\lambda_i)}{2\lambda_i}.
\end{equation}
Now define $M(y)=(2y-1)q_n(y)+2y(y-1)q_n'(y)$ and $N(y)=\frac{q_n(y)}{p_n(y)}$. 
Note that 
\begin{align*}
M(\lambda_i)&=(2\lambda_i-1)q_n(\lambda_i)+2\lambda_i(\lambda_i-1)q_n'(\lambda_i)\\
&=(2\lambda_i-1)p_n(\lambda_i)N(\lambda_i)+2\lambda_i(\lambda_i-1)\left(N'(\lambda_i)p_n(\lambda_i)+N(\lambda_i)p_n'(\lambda_i)\right).
\end{align*}
Using~\eqref{zeros10}, we arrive at 
\begin{align*}
M(\lambda_i)&=\lambda_ip_n(\lambda_i)N(\lambda_i)+2\lambda_i(\lambda_i-1)N'(\lambda_i)p_n(\lambda_i)\\
&=\lambda_ip_n(\lambda_i)\left(N(\lambda_i)+2(\lambda_i-1)N'(\lambda_i)\right).
\end{align*}
By Lemma~\ref{lemma-key}, we obtain $\sgn M(\lambda_i)=\sgn \lambda_ip_n(\lambda_i)=\sgn p_n(\lambda_i)=(-1)^i$ for $1\leqslant i\leqslant n$.
It is clear that $\sgn M(0)=\sgn (-q_n(0))=(-1)^{n+1}$ and $\sgn M(1)=\sgn q_n(1)=1$. 
So we find that $M(y)$ has one zero in each of the $n+1$ intervals $(0,\lambda_n),~(\lambda_n,\lambda_{n-1}),\ldots,(\lambda_2,\lambda_1),(\lambda_1,1)$.
Let $\mu_1>\mu_2>\cdots>\mu_{n+1}$ be the zeros of $M(y)$. Then 
\begin{equation}\label{zeros06}
1>\mu_1>\lambda_1>\mu_2>\lambda_2>\cdots >\mu_{n-1}>\lambda_{n-1}>\mu_n>\lambda_n>\mu_{n+1}>0.
\end{equation}
Set $s_1=1$, $s_{i+1}=\sqrt{\lambda_i}$ and $r_j=\sqrt{\mu_j}$ for $1\leqslant i\leqslant n$ and $1\leqslant j\leqslant n+1$. 
We immediately find that~\eqref{zeros-1} holds for $n+1$. This completes the proof.
\end{proof}

Following the notation of Theorem~\eqref{thm02}, we now ready to prove the main result of this paper.

\noindent{\bf A proof of Theorem~\ref{Thm01}:}
\begin{proof}
From~\eqref{dnxpnxqnx}, we get $$\widetilde{D}_{2n}(x)=\widetilde{B}_{2n}(x)-n\widetilde{A}_{2n-1}(x).$$
Let $F(x)=\prod_{i=1}^n(x-s_i)$ and $f(x)=\prod_{j=1}^n(x-r_j)$.
Then $$\widetilde{D}_{2n}(x)=(2n-1)!n(-1)^n\left\{2f(x)f(-x)-F(x)F(-x)\right\}.$$
Note that $\sgn \widetilde{D}_{2n}(s_{j+1})=(-1)^{j}$ and $\sgn \widetilde{D}_{2n}(r_j)=(-1)^{j+1}$, where $1\leqslant j\leqslant n-1$. 
Therefore, $\widetilde{D}_{2n}(x)$ has precisely one zero in each of $2n-2$ intervals $(s_{j+1},r_j)$ and $(-r_j,-s_{j+1})$.
Note that $\sgn \widetilde{D}_{2n}(r_n)=(-1)^{n-1}$ and $\sgn \widetilde{D}_{2n}(-r_n)=(-1)^{n+1}$.
It follows from~\eqref{chow} that $\sgn \widetilde{D}_{2n}(0)=(-1)^n$.
Therefore, $\widetilde{D}_{2n}(x)$ has precisely one zero in each of the intervals $(-r_n,0)$ and $(0,r_n)$.
Thus $\widetilde{D}_{2n}(x)\in\rz(-1,1)$.

Similarly, by~\eqref{dnxpnxqnx}, we get
$$\widetilde{D}_{2n+1}(x)=\widetilde{B}_{2n+1}(x)-\frac{2n+1}{2}\widetilde{A}_{2n}(x).$$
Let $G(x)=\prod_{i=1}^n(x-a_i)$ and $g(x)=\prod_{j=1}^n(x-b_j)$. Then
$$\widetilde{D}_{2n+1}(x)=(2n+1)!(-1)^nx\left\{g(x)g(-x)-\frac{1}{2}G(x)G(-x)\right\}.$$
Note that $\sgn \widetilde{D}_{2n+1}(a_{j+1})=(-1)^{j}$ and $\sgn \widetilde{D}_{2n+1}(b_j)=(-1)^{j+1}$, where $1\leqslant j\leqslant n-1$.
Therefore, $\widetilde{D}_{2n+1}(x)$ has precisely one zero in each of $2n-2$ intervals $(a_{j+1},b_j)$ and $(-b_j,-a_{j+1})$. 
Note that $\sgn \widetilde{D}_{2n+1}(b_n)=(-1)^{n+1}$ and $\sgn \widetilde{D}_{2n+1}(-b_n)=(-1)^{n}$.
It follows from~\eqref{dnxpnxqnx} that
$$\sgn \lim_{x \to 0}\frac{\widetilde{D}_{2n+1}(x)}{x}=(-1)^{n}.$$
Hence
$$\sgn \left(\lim_{x \to 0^-}\widetilde{D}_{2n+1}(x)\right)=(-1)^{n+1}, ~\sgn \left(\lim_{x \to 0^+}\widetilde{D}_{2n+1}(x)\right)=(-1)^{n}.$$
Therefore, $\widetilde{D}_{2n+1}(x)$ has precisely one zero in each of the intervals $(-b_n,0)$ and $(0,b_n)$. Moreover, $\widetilde{D}_{2n+1}(x)$ has a simple zero $x=0$.
Thus $\widetilde{D}_{2n+1}(x)\in\rz(-1,1)$.

In conclusion, we arrive at the real-rootedness of $\widetilde{D}_{2n}(x)$, i.e.,
$$\widetilde{D}_{2n}(x)=\frac{(2n)!}{2}\prod_{i=1}^n(x-c_i)(x+c_i),~\widetilde{D}_{2n+1}(x)=\frac{(2n+1)!}{2}x\prod_{i=1}^n(x-d_i)(x+d_i),$$
where $c_1>c_2>\cdots>c_{n-1}>c_n$ and $d_1>d_2>\cdots>d_{n-1}>d_n$.
Furthermore, we have
\begin{equation}\label{zeros-inequ1}
\begin{aligned}
&1>r_1>c_1>s_2>r_2>c_2>s_3>\cdots >r_{n-1}>c_{n-1}>s_n>r_n>c_n>0,\\
&1>b_1>d_1>a_2>b_2>d_2>a_3>\cdots >b_{n-1}>d_{n-1}>a_n>b_n>d_n>0.
\end{aligned}
\end{equation}
Then~\eqref{zeros01} and~\eqref{zeros02} are immediate from~\eqref{zeros-inequ1} and the transform $z=\frac{x-1}{x+1}$, as desired.
\end{proof}
\section*{Acknowledgements}
The author devoted more than a decade to the study of Brenti's suggestion~\cite[Section~5]{Brenti94} with the arguments of interlacing zeros.
During this period, the author 
benefited greatly from numerous discussions with my colleagues and fellow mathematicians, including Jun-Ying Liu and Chak-On Chow.
This work is supported by the National Natural Science Foundation of China (No. 12071063) and 
the Natural Science Foundation of Shandong Province of China (ZR2026MS0047).

\end{document}